\newtheorem{theorem}{Theorem}
\newtheorem{corollary}[theorem]{Corollary}
\newtheorem{lemma}[theorem]{Lemma}
\newtheorem{proposition}[theorem]{Proposition}
\newtheorem{remark}[theorem]{Remark}
\newenvironment{proof}[1][Proof]{\textbf{#1.} }{\ \rule{0.5em}{0.5em}}
\begin{document}

\title{{\LARGE A density for the local time of  the Brox diffusion
\footnote{This research was partially supported by the CONACYT and
the Universidad de Costa Rica.}}}
\author{
Jonathan Gutierrez-Pav\'{o}n
\thanks{Department of Mathematics, University of Costa Rica, San Jos\'e, Costa Rica.  Email: jonathan.gutierrez@ucr.ac.cr} and
Carlos G. Pacheco
\thanks{Department of Mathematics, CINVESTAV, Mexico City, Mexico Email: cpacheco@math.cinvestav.mx}}

\maketitle

\begin{abstract}
We give explictly the probability density of the local time of the Brox diffusion at first passage times.
Such formula is used to find the moments and to related the minima and maxima of the environment to the most and least visted points of the diffusion.
\end{abstract}

{\bf 2000 Mathematics Subject Classification: }
\\

\textbf{Keywords:} Brox diffusion, Local times, favorite points, Ray-Knight theorem.


\section{Introduction}

When studying processes with a random environment, it is natural to analyze de effect of the environment on the behaviour of the processs.
In fact, one is interested on determining, up to some confidence, the behaviour of the trayectories by knowing something of the environment.
A very well known situtation of this type arises with models such as the so-called Sinai's random walk and the Brox diffusion, 
where information of the environment can say where is mainly localized the particle, see for instance \cite{Revesz, Brox}.

In this paper we focus on this kind of question for the Brox diffusion. 
It is known that the environment gives rise to a stochastic process, sometimes called the potential, 
and it is already known as well how the minimum values of such potential determines the places where the particles acumulates more local time, see \cite{Cheliotis}.
The places in the state space where the local time takes high values are said to be favorite points.

For these reasons we are interested on studying the local time and its probability densities.
One has to remember that for the Brownian motion the local times can be related to specific stochastic processes such as the Bessel diffusion; this is the content of the so-called Ray-Knight theorems.
What we do here is to calculate explicitly the probability density of the local time for the Brox diffusion under certaing stopping times.
It is interesting to see how the density is a mixture of a absolutely continuous density and a discrete one.
A very straight forward application is finding moments of the local times where one can easly read the connection between minima of the potential and favorite points of the diffusion, see Remark \ref{NotaPF}.

\section{Preliminaries}

Intuitively speaking the Brox diffusion was originally described with the following stochastic differential equation
\begin{equation} \label{ecuacion informal de brox}
dX_{t}=-\frac{1}{2}  \,W'(X_{t})\,dt+dB_{t},
\end{equation}
where $B:= \{B_{t}: t\geq 0  \}$ represents the standard Brownian motion, and $W:= \{ W(x) : x \in \mathbb{R} \}$ is a two-sided Brownian motion. 
The processes $B$ and $W$ are independent from each other. 
Here $W'$ denotes the derivative of $W$, and usually $W'$ is known as the white noise.

To give a rigourous meaning of the expression (\ref{ecuacion informal de brox}) we can use the associated generator of the Markov process. 
More precisely one can say that the process $X:= \{ X_{t}:t\geq 0 \}$ is associated with the infinitesimal generator
\begin{equation*} \label{brox operator informal}
Lf(x):=\displaystyle\frac{1}{2}\;  e^{W(x)}\,\frac{d}{dx} \left(
e^{-W(x)}\,\frac{df(x)}{dx}\right).
\end{equation*}
In this case, the scale function is
\begin{equation}\label{brox funcion escala}
s(x):=\displaystyle\int_{0}^{x}\,e^{W(y)}\,dy,
\end{equation}
and the speed measure is
\begin{equation}\label{broz medida velocidad}
m(A):=\displaystyle\int_{A} 2e^{-W(y)}\,dy, \; \; \mbox{for Borel
sets}\; A \subseteq \mathbb{R}.
\end{equation}
Therefore we can represent the operator $L$ in the following way
$$Lf=\displaystyle \frac{d}{dm}\frac{d}{ds}f.$$

When leaving fixed a trajectory of $W$, the process $X$ associated with this operator is indeed a diffusion, thus, one can give an explicity formula of the process $X$ in the following way:
\begin{equation} \label{Xt}
X_{t}=s^{-1}(B_{T^{-1}_{t}}),
\end{equation}
where
$$T_{t}:=\displaystyle\int_{0}^{t}e^{-2W(s^{-1}(B_{u}))}\,du.$$
Now, let us say some facts on local times.\\

Let $L_{X}(t,x)$ be the local time of the Brox diffusion at time $t$ at the site $x$. 
It is known that $L_{X}:=\{ L_{X}(t,x): t\geq 0, x\in\mathbb{R} \}$ is jointly continuous stochastic process such that for any measurable function $f$ and for any $t\geq 0$:
\begin{equation} \label{ecuacion de local time brox}
\displaystyle\int_{0}^{t}f(X_{s})ds = \int_{-\infty}^{\infty}
f(x)L_{X}(t,x)dx.
\end{equation}
By substituting (\ref{Xt}) in (\ref{ecuacion de local time brox}) it is easy see that
\begin{equation}\label{definicion local time brox}
L_{X}(t,x)=e^{-W(x)}L_{B}(T^{-1}(t),s(x)),
\end{equation}
where $L_{B}$ represent the local time of the Brownian motion, see \cite{Shi}.
We use previous formula to give a explicit expression of local time of the Brox process.

Finally, we briefly mention the statement of the Ray-Knight theorem, see for instance \cite{RevuzYor}.
It says that $\{L_{B}(\sigma(1),1-a), a\in [0,1] \}$ is the same in distribution as a two dimensional squared Bessel process $\{Z_{a}, \ 0\leq a \leq 1\}$, where $\sigma(1)=\inf \{t: B_{t}=1 \}$.
In general, it is also true that for fixed $a < b$, the local time $\{L_{B}(\sigma(b),a) \}$ has the same distribution as the random variable $Z_{b-a}$.
In particular, for fixed $a < b$, the distribution of $L_{B}(\sigma(b),a)$ is an exponential random variable with mean 2.

\section{Local time of Brox diffusion}

Let us now define the random variable that we are going to analyze.
Take the stopping time $\tau(b):= \inf \{t: X_{t}=b \}$.
Our goal is to study the stochastic process $\{L_{X}(\tau(b),a), b>a\}$, for $a,b$ fixed.

\begin{proposition}
For $0 < a <b $ fixed, the random variable $L_{X}(\tau(b),a)$ is exponential with parameter $\lambda:= \displaystyle\frac{e^{W(a)}}{2(s(b)-s(a))}.$
\end{proposition}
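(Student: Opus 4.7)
The natural approach is to push the problem down to the driving Brownian motion through formula (\ref{definicion local time brox}) and then apply the Ray-Knight theorem recalled at the end of Section 2. All computations will be carried out conditionally on the environment $W$, treating the scale function $s$ and the time change $T$ as fixed; the Brownian motion $B$ is independent of $W$, so this is legitimate, and the quantities $s(a), s(b), e^{W(a)}$ become deterministic numbers in this quenched picture.

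Setting $t=\tau(b)$ and $x=a$ in (\ref{definicion local time brox}) gives
\begin{equation*}
L_X(\tau(b),a)=e^{-W(a)}\,L_B\!\left(T^{-1}(\tau(b)),\,s(a)\right).
\end{equation*}
The first step is to show that $T^{-1}(\tau(b))=\sigma(s(b))$, the Brownian first passage time to the level $s(b)$. From (\ref{Xt}), $X_t=b$ is equivalent to $B_{T_t^{-1}}=s(b)$, because $s$ is a strictly increasing homeomorphism (its derivative $e^{W(y)}$ is strictly positive, and $s(0)=0<s(a)<s(b)$ since $0<a<b$). Since $T_t$ is continuous and strictly increasing in $t$, so is $T^{-1}$; hence the infimum defining $\tau(b)$ is preserved under the time change, yielding $T^{-1}(\tau(b))=\sigma(s(b))$.

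The second step is Ray-Knight applied at the (now deterministic, given $W$) levels $s(a)<s(b)$: it gives $L_B(\sigma(s(b)),s(a))\stackrel{d}{=}Z_{s(b)-s(a)}$, where $Z$ is a $2$-dimensional squared Bessel process starting at $0$. Writing $Z_u=\xi_1^2+\xi_2^2$ with $\xi_1,\xi_2\sim N(0,u)$ independent shows that $Z_{s(b)-s(a)}$ is exponential with mean $2(s(b)-s(a))$, i.e.\ with rate $1/[2(s(b)-s(a))]$. Multiplying by the positive constant $e^{-W(a)}$ rescales the rate to
\begin{equation*}
\lambda=\frac{e^{W(a)}}{2(s(b)-s(a))},
\end{equation*}
which is the desired parameter. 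The only step requiring genuine care is the time-change identification $T^{-1}(\tau(b))=\sigma(s(b))$; once this is in hand the rest is bookkeeping on exponential distributions.
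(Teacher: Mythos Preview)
Your proof is correct and follows the same route as the paper: reduce to Brownian local time via (\ref{definicion local time brox}), identify $T^{-1}(\tau(b))=\sigma(s(b))$, apply Ray-Knight to obtain $L_B(\sigma(s(b)),s(a))\stackrel{d}{=}Z_{s(b)-s(a)}$, and read off the exponential law after the deterministic scaling by $e^{-W(a)}$. The only cosmetic difference is that the paper scales $Z_{s(b)-s(a)}\stackrel{d}{=}(s(b)-s(a))Z_1$ and then invokes that $Z_1$ is exponential with mean $2$, whereas you compute the law of $Z_{s(b)-s(a)}$ directly from the Gaussian representation; both arrive at the same parameter.
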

\begin{proof}
Using (\ref{definicion local time brox}) we have that $L_{X}(\tau(b),a)= e^{-W(a)}L_{B}(T^{-1}(\tau(b)),s(a))$. 
One can see that $T^{-1}(\tau(b))=\sigma(s(b))$.

Then, by the Ray-Knight theorem, if $Z$ represents a two dimensional squared Bessel process, then we have
\begin{eqnarray*}
P(L_{X}(\tau(b),a) \leq x)&=& P(e^{-W(a)}L_{B}(T^{-1}(\tau(b)),s(a)) \leq x)\\
&=& P(e^{-W(a)}L_{B}(\sigma(s(b)),s(a)) \leq x)\\
&=& P(e^{-W(a)}Z_{s(b)-s(a)}) \leq x)\\
&=& P(e^{-W(a)} (s(b)-s(a))Z_{1} \leq x)\\
&=& P \left(Z_{1} \leq \displaystyle \frac{e^{W(a)}x}{s(b)-s(a)}  \right).
\end{eqnarray*}
Using the fact that $Z_{1}$ is an exponential random variable with mean $2$, we obtain the result.
\end{proof}\\


\begin{lemma}
For any fixed environment $W$, and $a\in\mathbb{R}$ fixed, the process $\{ L_{X}(\tau(t),a): t\geq 0\}$ has independent increments.
\end{lemma}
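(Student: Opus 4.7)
The strategy is to transfer the statement to one about standard Brownian motion using the identity derived in the proof of the previous proposition, and then to exploit the strong Markov property of $B$ at the Brownian hitting times $\sigma(s(t_i))$. Since the environment $W$ is held fixed, $s$ and $e^{-W(a)}$ are deterministic, so from $L_X(\tau(t),a)=e^{-W(a)}L_B(\sigma(s(t)),s(a))$ together with the strict monotonicity of $s$, writing $c:=s(a)$ it suffices to show that
\[
 u\;\longmapsto\; L_B(\sigma(u),c),\qquad u\geq c,
\]
has independent increments; the reparametrization $t\mapsto s(t)$ and the multiplicative constant $e^{-W(a)}$ preserve independence.

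To prove the Brownian claim, I would fix $c<u_1<u_2<\cdots<u_n$, so that $\sigma(u_1)\leq\sigma(u_2)\leq\cdots\leq\sigma(u_n)$, and analyze the $k$-th increment. Using additivity of the local time over disjoint time intervals,
\[
 L_B(\sigma(u_k),c)-L_B(\sigma(u_{k-1}),c)\;=\;\lim_{\varepsilon\downarrow 0}\frac{1}{2\varepsilon}\int_{\sigma(u_{k-1})}^{\sigma(u_k)}\mathbf{1}_{|B_s-c|<\varepsilon}\,ds,
\]
and then applying the strong Markov property at $\sigma(u_{k-1})$, the shifted process $\widetilde B_t:=B_{\sigma(u_{k-1})+t}-u_{k-1}$ is a Brownian motion independent of $\mathcal{F}_{\sigma(u_{k-1})}$, and $\sigma(u_k)-\sigma(u_{k-1})$ equals the first time $\widetilde B$ hits $u_k-u_{k-1}$. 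The right-hand side above is therefore a functional purely of $\widetilde B$ up to that hitting time, hence independent of $\mathcal{F}_{\sigma(u_{k-1})}$.

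Finally, I would iterate. Since each earlier increment $L_B(\sigma(u_j),c)-L_B(\sigma(u_{j-1}),c)$ with $j<k$ is $\mathcal{F}_{\sigma(u_{k-1})}$-measurable (the $\sigma$-algebras are nested, $\mathcal{F}_{\sigma(u_1)}\subset\mathcal{F}_{\sigma(u_2)}\subset\cdots\subset\mathcal{F}_{\sigma(u_{n-1})}$), each increment is independent of the entire past $\sigma$-algebra generated by the previous ones, which is exactly joint independence of the $n$ increments. The main obstacle is not a new distributional computation but rather the careful bookkeeping that promotes pairwise independence to joint independence; the Ray-Knight identity recalled in the preliminaries is not needed here, only the strong Markov property and the additive structure of local time.
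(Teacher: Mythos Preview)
Your approach is essentially the same as the paper's: both reduce to Brownian local times via $L_X(\tau(t),a)=e^{-W(a)}L_B\bigl(T^{-1}(\tau(t)),s(a)\bigr)$, recognize the Brownian times $T^{-1}(\tau(t_i))=\sigma(s(t_i))$ as an increasing family of stopping times, and invoke the strong Markov property to see that increments over disjoint time-windows are independent. Your write-up is in fact more complete than the paper's, which only treats two disjoint increments and leaves the passage from pairwise to joint independence implicit; your iteration via the nested filtrations $\mathcal{F}_{\sigma(u_1)}\subset\cdots\subset\mathcal{F}_{\sigma(u_{n-1})}$ supplies exactly that missing bookkeeping.
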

\begin{proof}
Take times $t_{1}<t_{2}<t_{3}<t_{4}$, and define de increments $I_{1}$ and $I_{2}$ as, 
$$I_{1}:=L_{X}(\tau(t_{2}),a)-L_{X}(\tau(t_{1}),a),$$
and similarly for $I_{2}$ using $t_{3}$ and $t_{4}$ in liu of $t_{1}$ and $t_{2}$, respectively.

Using (\ref{definicion local time brox}) we have that
$$I_{1}=e^{-W(a)}\left( L_{B}(T^{-1}(\tau(t_{2})),a)-L_{B}(T^{-1}(\tau(t_{1})),a) \right)$$
and the same for $I_{1}$.

If $T_{i}:=T^{-1}(\tau(t_{i}))$ for $i=1,2,3,4$, notice that $T_{1}\leq T_{2}\leq T_{3}\leq T_{4}$, and although they depend on $B$, they are stopping times. 
Thus, by the strong Markov property, the random varibles $I_{1}$ and $I_{2}$ are independent because they are measurements on disjoint ''time-windows''. 
\end{proof}\\

We use previous results to find the distribution of the random variable 
$$L_{X}(\tau(c),a)-L_{X}(\tau(b),a),$$ 
where $0 < a < b < c$.

\begin{theorem}
The probability density $f$ of $L_{X}(\tau(c),a)-L_{X}(\tau(b),a)$ is given by 
\begin{equation}\label{Densidad}
f(t)=\lambda (1- \alpha)e^{-\lambda t} + \alpha \delta_{0}(t),
\end{equation}
where 
$$\lambda= \displaystyle\frac{e^{W(a)}}{2(s(c)-s(a))} >0, \text{ and } \alpha=\frac{s(b)-s(a)}{s(c)-s(a)}.$$
Here $\delta_{0}$ represents the Dirac Delta function at the point $0$. 
\end{theorem}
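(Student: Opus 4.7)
My plan is to reduce the problem to an increment of Brownian local time at level $s(a)$ between the passage times $\sigma(s(b))$ and $\sigma(s(c))$, and then to analyze that increment by two successive applications of the strong Markov property of $B$, followed by Ray--Knight.

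First I would use (\ref{definicion local time brox}) together with the identity $T^{-1}(\tau(r))=\sigma(s(r))$ already exploited in the proof of the Proposition above, to write
$$L_{X}(\tau(c),a)-L_{X}(\tau(b),a)=e^{-W(a)}\Bigl[L_{B}(\sigma(s(c)),s(a))-L_{B}(\sigma(s(b)),s(a))\Bigr].$$
By the strong Markov property of $B$ at the stopping time $\sigma(s(b))$, the shifted process $B'_{t}:=B_{\sigma(s(b))+t}$ is a Brownian motion started at $s(b)$, and the bracketed quantity equals in law the local time of $B'$ at level $s(a)$ accumulated up to the first time $B'$ reaches $s(c)$.

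Next I would condition on whether $B'$ first hits $s(a)$ or $s(c)$. The classical gambler's ruin formula applied to $B'$ starting from $s(b)\in(s(a),s(c))$ gives that the probability of reaching $s(c)$ first equals $\alpha=(s(b)-s(a))/(s(c)-s(a))$; on this event, continuity of paths forces $B'$ to stay strictly above $s(a)$ up to $\sigma(s(c))$, so no local time at $s(a)$ accumulates and the increment is exactly $0$, producing the atom $\alpha\,\delta_{0}$. On the complementary event, $B'$ hits $s(a)$ first, and a second application of the strong Markov property at that hitting time reduces the computation to the local time at $s(a)$ of a Brownian motion started from $s(a)$ and run until it first reaches $s(c)$. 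Translating so that the starting point is $0$ and invoking the Ray--Knight statement recalled in Section 2, this quantity is exponentially distributed with mean $2(s(c)-s(a))$.

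Assembling the two cases and multiplying by $e^{-W(a)}$ yields exactly the mixture in (\ref{Densidad}): an atom of mass $\alpha$ at the origin, plus an absolutely continuous part of total mass $1-\alpha$ which is exponential with rate $\lambda=e^{W(a)}/(2(s(c)-s(a)))$. The only technical point worth checking carefully is that the local time truly vanishes on $\{B'\text{ reaches }s(c)\text{ before }s(a)\}$, which is immediate from the fact that the support of $x\mapsto L_{B}(t,x)$ is contained in the closure of the range of $B$ on $[0,t]$. I expect the main work to lie in the clean book-keeping across the two strong Markov applications so that Ray--Knight delivers precisely the rate appearing in $\lambda$, rather than in any single hard estimate.
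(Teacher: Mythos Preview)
Your argument is correct, but it follows a genuinely different route from the paper. The paper proceeds analytically: it uses the independence-of-increments lemma to write
\[
E\bigl[e^{-t(L_X(\tau(c),a)-L_X(\tau(b),a))}\bigr]=\frac{E\bigl[e^{-tL_X(\tau(c),a)}\bigr]}{E\bigl[e^{-tL_X(\tau(b),a)}\bigr]},
\]
plugs in the two exponential Laplace transforms coming from the Proposition, obtains the rational function $(1+At)/(1+Bt)$, and then inverts the Laplace transform by hand to read off the mixture $\alpha\delta_0+(1-\alpha)\lambda e^{-\lambda t}$. Your approach is instead a direct path decomposition: strong Markov at $\sigma(s(b))$, then a gambler's-ruin split according to whether the restarted Brownian motion reaches $s(c)$ or $s(a)$ first, and finally a second strong Markov plus Ray--Knight on the event it reaches $s(a)$ first. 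This makes the structure of the answer transparent---the atom of mass $\alpha$ is literally the scale-function hitting probability $P_{s(b)}(\sigma(s(c))<\sigma(s(a)))$, and the exponential rate $\lambda$ is exactly the Ray--Knight rate at the starting level---whereas in the paper's computation these quantities only emerge after the Laplace inversion. Conversely, the paper's route hands you the moment generating function $(1+At)/(1+Bt)$ for free, which is what drives the subsequent Corollary on moments; with your approach one would have to recompute it from the density. Both arguments rely on the same ingredients (Ray--Knight and strong Markov), so neither is deeper, but yours is more probabilistic and the paper's more mechanical.
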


\begin{proof}
Note first that
\begin{equation} \label{ecuacion para usar incrementos indep}
E\left(e^{-tL_{X}(\tau(c),a)}  \right)=
E\left(e^{-t[L_{X}(\tau(c),a)-L_{X}(\tau(b),a)]}  \right)\cdot
E\left(e^{-tL_{X}(\tau(b),a)}  \right).
\end{equation}
Thus,
\begin{equation}
E\left(e^{-t[L_{X}(\tau(c),a)-L_{X}(\tau(b),a)]} \right) =
\displaystyle \frac{E\left(e^{-tL_{X}(\tau(c),a)}  \right)}{
E\left(e^{-tL_{X}(\tau(b),a)}  \right)}.
\end{equation}

On the other hand, using the fact that $L_{X}(\tau(b),a)$ is exponential with parameter 
$\lambda:= \displaystyle\frac{e^{W(a)}}{2(s(b)-s(a))}$, 
then the moment-generating function of $L_{X}(\tau(b),a)$ is the following:
\begin{equation}\label{generadora momentos tiempo tocal}
E\left(e^{-tL_{X}(\tau(b),a)} \right)=
\displaystyle\frac{e^{W(a)}}{e^{W(a)}+2t(s(b)-s(a))}.
\end{equation}
Substituting (\ref{generadora momentos tiempo tocal}) in (\ref{ecuacion para usar incrementos indep}) we obtain the moment-generating function of $L_{X}(\tau(c),a)-L_{X}(\tau(b),a)$
\begin{equation}\label{generadora de momentos del incremento 1}
E\left(e^{-t[L_{X}(\tau(c),a)-L_{X}(\tau(b),a)]}  \right)=
\displaystyle\frac{e^{W(a)}+2t(s(b)-s(a))}{e^{W(a)}+2t(s(c)-s(a))}.
\end{equation}

The previous expression can be written as
\begin{equation}\label{gene. momentos del incremento}
 E\left(e^{-t[L_{X}(\tau(c),a)-L_{X}(\tau(b),a)]}  \right)=
\displaystyle\frac{1}{1+\frac{2(s(c)-s(a))t}{e^{W(a)}}}+
\frac{2(s(b)-s(a))}{e^{W(a)}}\cdot
\frac{t}{1+\frac{2(s(c)-s(a))t}{e^{W(a)}}}.
\end{equation}
We now consider the following formulae:
$$\displaystyle \mathcal{L}^{-1}\left( \frac{1}{1+\frac{2(s(c)-s(a))t}{e^{W(a)}}}  \right)= \frac{e^{W(a)}}{2(s(c)-s(a))}e^{\displaystyle\frac{-e^{W(a)}t}{2(s(c)-s(a))}},$$
$$\displaystyle \mathcal{L}^{-1} \left( \frac{t}{1+\frac{2(s(c)-s(a))t}{e^{W(a)}}}\right)=  \frac{e^{W(a)}}{2(s(c)-s(a))} \delta_{0}(t)- \frac{e^{2W(a)}}{4(s(c)-s(a))^{2}}e^{\displaystyle\frac{-e^{W(a)}t}{2(s(c)-s(a))}}, $$
where $\mathcal{L}^{-1}$ is the inverse Laplace transform, and $\delta_{0}$ represent the Dirac Delta function.

Using the previous formulae, we can obtain the distribution of
$L_{X}(\tau(c),a)-L_{X}(\tau(b),a)$, for $0 < a < b < c$. 

Indeed, if $F$ represents such distribution function, then we have
\begin{equation} \label{distribucion density}
F(t)= \displaystyle\left[1-\frac{s(b)-s(a)}{s(c)-s(a)}\right]\cdot
\left[ 1-e^{\displaystyle\frac{-e^{W(a)}t}{2(s(c)-s(a))}} \right]+
\frac{s(b)-s(a)}{s(c)-s(a)}\cdot H_{0}(t),
\end{equation}
where $H_{0}$ represent the Heaviside function.
Thus we obtain the density (\ref{Densidad}).
\end{proof}

Now, we can use these formulae to calculate explicitly the moments of the random variable $L_{X}(\tau (c),a)-L_{X}(\tau (b),a)$. 
\begin{corollary} 
It holds that 
$$E\left[\left(L_{X}(\tau (c),a)-L_{X}(\tau (b),a)  \right)^{n}\right]=n! \frac{2(s(c)-s(b))}{e^{W(a)}} \left[ \frac{2(s(c)-s(a))}{e^{W(a)}} \right]^{n-1},$$
where $a,b,c$ are three numbers such that $0<a<b<c$. 
\end{corollary}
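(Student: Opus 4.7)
The plan is to integrate $t^n$ against the mixture density (\ref{Densidad}) obtained in the previous theorem; the calculation is essentially the $n$-th moment of an exponential, scaled by the mixing weight of the absolutely continuous component.

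More concretely, with $Y := L_{X}(\tau(c),a)-L_{X}(\tau(b),a)$ and with $\lambda$ and $\alpha$ as in the theorem, I would write
\[
E(Y^{n}) \;=\; \int_{0}^{\infty} t^{n}\,\Big[\lambda(1-\alpha)e^{-\lambda t} + \alpha\,\delta_{0}(t)\Big]\,dt
\]
and split this into two contributions. For $n\geq 1$ the Dirac mass at $0$ contributes nothing, since $\int t^{n}\delta_{0}(t)\,dt = 0$. The exponential part is a standard gamma integral, giving $(1-\alpha)\,\lambda\cdot n!/\lambda^{n+1} = (1-\alpha)\,n!/\lambda^{n}$.

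It then remains to substitute the explicit expressions $\lambda = e^{W(a)}/[2(s(c)-s(a))]$ and $1-\alpha = [s(c)-s(b)]/[s(c)-s(a)]$ and rearrange. The factor $(1-\alpha)$ produces the $s(c)-s(b)$ appearing out front, while the $n$ factors of $1/\lambda$ produce $\big[2(s(c)-s(a))/e^{W(a)}\big]^{n}$; pulling one of these into the prefactor to combine with $s(c)-s(b)$ yields exactly the claimed form
\[
n!\,\frac{2(s(c)-s(b))}{e^{W(a)}}\left[\frac{2(s(c)-s(a))}{e^{W(a)}}\right]^{n-1}.
\]

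There is no real obstacle here; the only thing to be mindful of is the interpretation of the Dirac term, which justifies simply dropping it for $n\ge 1$, and correctly bookkeeping the algebraic regrouping so that one copy of $2(s(c)-s(a))/e^{W(a)}$ is absorbed into the $s(c)-s(b)$ factor.
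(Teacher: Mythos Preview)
Your argument is correct. You compute the $n$-th moment by integrating $t^{n}$ against the mixture density (\ref{Densidad}); the Dirac term drops out for $n\geq 1$, and the exponential part gives $(1-\alpha)\,n!/\lambda^{n}$, which after substitution and regrouping yields the stated formula.

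The paper takes a different route: rather than using the density, it goes back to the moment-generating function (\ref{generadora de momentos del incremento 1}), observes that it has the form $f(t)=(1+At)/(1+Bt)$ with $A=2(s(b)-s(a))/e^{W(a)}$ and $B=2(s(c)-s(a))/e^{W(a)}$, and then computes the $n$-th derivative of this rational function in closed form, $f^{(n)}(t)=n!(-1)^{n}(B-A)B^{n-1}(1+Bt)^{-n-1}$, evaluating at $t=0$. Your approach is more direct here, since the density has already been worked out in the preceding theorem and the integral is a standard gamma moment; the paper's approach avoids the density entirely and would work even if only the Laplace transform were known. Both are elementary, and the algebra at the end is the same regrouping of one factor of $1/\lambda$ into the prefactor.
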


\begin{proof}
First note that if $f(t)=\frac{1+At}{1+Bt}$, where $A$ and $B$ are constants, then 
\begin{equation}\label{formula de la derivada de f}
\displaystyle\frac{d^{n}f(t)}{dt^{n}}= n! (-1)^{n} (B-A) (1+Bt)^{-n-1} B^{n-1}.
\end{equation}
From the formula (\ref{generadora de momentos del incremento 1}), we can see that the moment-generating function of $L_{X}(\tau(c),a)-L_{X}(\tau(b),a)$ has the form  $f(t)=\frac{1+At}{1+Bt}$ 
with $A= \frac{2(s(b)-s(a))}{e^{W(a)}}$ and $B=\frac{2(s(c)-s(a))}{e^{W(a)}}$. 

Now, using the formulae (\ref{generadora de momentos del incremento 1}) and (\ref{formula de la derivada de f}), 
as well as the fact that the random variable $X$ satisfies 
$$\frac{d^{n}E(e^{-tX})}{dt^{n}}|_{t=0}=(-1)^{n}E(X^{n}),$$  
we have the explicit formula for the moments of $L_{X}(\tau (c),a)-L_{X}(\tau (b),a)$.
\end{proof} \\

Finally, let us mention the following important application. 
In the case where $n=1$ we end up with
\begin{equation} \label{experanza del incremento}
\displaystyle E(L_{X}(\tau (c),a)-L_{X}(\tau (b),a))= \displaystyle
\frac{2(s(c)-s(b))}{e^{W(a)}}.
\end{equation}

\begin{remark}\label{NotaPF}
We can see in the previous expression (\ref{experanza del incremento}) that if $a$ satisfies $W(a)\leq W(t)$ for all $t \in [0,b]$, 
then the expected value of $L_{X}(\tau (c),a)-L_{X}(\tau (b),a)$ is the largest possible.
That is, if $W(a)$ is a minimum of $W$ in the interval $[0,b]$, then it is expected that the random variable $L_{X}(\tau (c),a)-L_{X}(\tau (b),a)$ reachs its largest value.

Similarly, if $W(a)$ is a maximum of $W$ in the interval $[0,b]$,
then it is expected that the random variable $L_{X}(\tau (c),a)-L_{X}(\tau (b),a)$ reachs its smallest value.

In short, the stochastic process $X$ spends more time at minima of the environment, and less time at maxima of the environment.
\end{remark}



\begin{thebibliography}{9}

\bibitem{Brox} T. Brox (1986). \textit{A one-dimensional diffusion process in a Wiener medium}, Ann. Probab. no.4,1206-1218.

\bibitem{Cheliotis} D. Cheliotis (2008). \textit{Localization of favorite points for diffusion in random environment}, Stochastic Process. Appl.,118(7): 1159-1189.

\bibitem{Revesz} P. R\'{e}v\'{e}sz (2013). \textit{Random walk in random and non-random envirenments}, Third edition, World Scientific.

\bibitem{RevuzYor} D. Revuz and M. Yor (1999). Continuous Martingales and Brownian Motion. Springer.

\bibitem{Shi} Z. Shi (1998). \textit{A local time curiosity in random coefficients}, Contemp. Math, 41:351-356.









\end{thebibliography}
\end{document}